\let\bbordermatrix\bordermatrix
\patchcmd{\bbordermatrix}{8.75}{4.75}{}{}
\patchcmd{\bbordermatrix}{\left(}{\left[}{}{}
\patchcmd{\bbordermatrix}{\right)}{\right]}{}{}
\newcommand{\R}{{\mathbb R}}
\newcommand{\Oo}{{\cal O}}
\newcommand{\beq}{\begin{equation}}
\newcommand{\eeq}{\end{equation}}
\newcommand{\bea}{\begin{eqnarray*}}
\newcommand{\eea}{\end{eqnarray*}}
\newcommand{\beqa}{\begin{eqnarray}}
\newcommand{\eeqa}{\end{eqnarray}}
\newcommand{\bmat}{\begin{bmatrix}}
\newcommand{\emat}{\end{bmatrix}}
\newcommand{\ba}{\begin{align}}
\newcommand{\ea}{\end{align}}
\newcommand{\bas}{\begin{align*}}
\newcommand{\eas}{\end{align*}}
\newcommand{\dsp}{\displaystyle}
\newcommand{\ve}{\varepsilon}
\newtheorem{theo}{{\bf Theorem}}
\newtheorem*{theo*}{{\bf Theorem}}
\newtheorem{lem}{{\bf Lemma}}
\newtheorem{prob}{{\bf Problem}}
\theoremstyle{definition}
\newtheorem{defn}[theo]{{\bf Definition}}
\newtheorem{rem}{{\bf Remark}}
\newcommand{\bth}{\begin{theo}}
\renewcommand{\eth}{\end{theo}}
\newcommand{\bdefi}{\begin{defn}}
\newcommand{\edefi}{\end{defn}}
\newcommand{\bprob}{\begin{prob}}
\newcommand{\eprob}{\end{prob}}
\author{Alain Bourget\thanks{\textbf{Mailing address}: Department of Mathematics, California State University (Fullerton),
McCarthy Hall 154, Fullerton CA 92834 (US). \textbf{Email address}: abourget@fullerton.edu,  tmcmillen@fullerton.edu } \ and Tyler McMillen\footnotemark[1] }
\title{Asymptotics of determinants of discrete Schr\"{o}dinger operators}
\begin{document}

\maketitle

\bibliographystyle{plain}
 
\begin{abstract}

We consider the asymptotics of the determinants of large discrete Schr\"{o}\-din\-ger operators, i.e. ``discrete Laplacian $+$ diagonal'':  
\[T_n(f) = -[\delta_{j,j+1}+\delta_{j+1,j}] + \mbox{diag}(f(1/n), f(2/n),\dots, f(n/n))  
\]
We extend a result of M. Kac \cite{ka69} who found a formula for 
\[\dsp \lim_{n\rightarrow\infty} \frac{\det(T_n(f))}{G(f)^n}
\]
in terms of the values of $f$, where $G(f)$ is a constant.  We extend this result in two ways:  First, we consider shifting the index:  Let 
\[T_n(f;\varepsilon) =  -[\delta_{j,j+1}+\delta_{j+1,j}]  + \mbox{diag}\left(f\left(\frac{\varepsilon}{n}\right), f\left(\frac{1+ \varepsilon}{n}\right), \dots , f\left(\frac{n-1+ \varepsilon}{n}\right)\right)  
\]
We calculate $\lim \det T_n(f;\ve)/G(f)^n$ and show that this limit can be any positive number by shifting $\varepsilon$, even though the asymptotic eigenvalue distribution of $T_n(f;\varepsilon)$ does not depend on $\varepsilon$.  Secondly, we  derive a formula for the asymptotics of $\det T_n(f)/G(f)^n$ when $f$ has jump discontinuities.  In this case the asymptotics depend on the fractional part of $c n$, where $c$ is a point of discontinuity.
\end{abstract}

\bigskip

\noindent
{\bf Keywords}:   Schr\"{o}dinger operators, determinants
\bigskip

\noindent
{\bf AMS subject classifications}: 15A15, 15B05,  47B36, 35P20

%
%
\medskip
\section {Introduction and main results }
\label{DSOsection}
%
%


This paper is concerned with a remarkable and little known result of M. Kac on the asymptotics of the determinant of the discrete Schr\"{o}dinger operator
\beq 
T_n(f) = \begin{bmatrix} f(\frac{1}{n}) & -1 & 0 & \cdots & 0 \\
-1 & f(\frac{2}{n}) & -1 & \cdots & 0 \\
0 & -1 & f(\frac{3}{n}) & \cdots & 0 \\
 &  &  & \ddots
 \\
0 & 0 & 0 & \cdots & f(\frac{n}{n}) 
\end{bmatrix}
\label{DSOdef}
\eeq

By a result of Kac, Murdock and Szeg\H{o} \cite{kamusz53}, the following holds for the trace.  As long as $f$ is real valued and Riemann integrable, we have 
\begin{equation} \label{First KMS}
\lim_{n\rightarrow\infty} \frac{ \text{Tr}[\varphi(T_n(f))]}{n} = \frac{1}{2\pi} \int_0^1 \int_0^{2\pi} \varphi(f(x)-2\cos t) \, dt \, dx 
\end{equation}  
for any continuous $\varphi(s)$.  
This result says, roughly, that as $n\rightarrow\infty$, the eigenvalues of $T_n(a)$ distribute like the values of $f(x)-2\cos t$ sampled at regularly spaced points in the rectangle $0\leq x\leq 1, \ 0\leq t\leq 2\pi$.
 
The formula \eqref{First KMS} gives us some information about the determinant.  Let
\[
D_n(f) = \det T_n(f)
\]
Then, with $\varphi = \log$, as long as $f>2$, \eqref{First KMS} can be written
 \begin{equation*}
 \lim_{n\rightarrow\infty} D_n(f)^{1/n} =  G(f)
 \end{equation*}
 where 
 \[G(f) = \exp \left\{ \int_0^1 \log\left(\frac{f(x) + \sqrt{f^2(x)-4}}{2}\right) dx \right\}
\]
is the geometric mean of $f(x)-2\cos t$.
  
 \smallskip

In the early 1960's, Mejlbo and Schmidt \cite{mesc62} considered determinants of a broader class of matrices, of which \eqref{DSOdef} is a special case.  Their result implies that, as long as $f>2$ and $f\in C^{2+\alpha}([0,1])$ for some $\alpha>0$, then we have the more precise statement
\[ \lim_{n \to \infty}  \frac{D_n(f)}{G(f)^n}   = E(f)
\]
where $E(f)$ is a constant defined in the following way.  Let 
\[ V_k(f;x) = \frac{1}{2\pi} \int_0^{2\pi} \log (f(x)-2\cos t) e^{-ikt} dt
\]
be the $k$th Fourier coefficient of $\log(f(x)-2\cos t)$.  Then
\begin{align*}
E(f) &= \exp\frac{1}{2}\bigg\{ V_0(f;0)+V_0(f;1) 
\\
& \quad + \left.\sum_{k=1}^{\infty} kV_{k}(f;0)V_{-k}(f;0) + \sum_{k=1}^{\infty} kV_{k}(f;1)V_{-k}(f;1)\right\}
\end{align*}
Remarkably, $E(f)$ depends on the value of $f$ only at $x=0$ and $x=1$.

In 1969 Kac \cite{ka69} derived a beautiful and simple formula for $E(f)$ for this case.  
\begin{theo}[Kac, 1969]
\label{kacformula}
Let $f$ be twice differentiable on $[0,1]$, with a bounded second derivative, and satsify $f > 2$.  Then, 
\beq
 \lim_{n \to \infty}  \frac{D_n(f)}{G(f)^n}   = \frac{1}{2} \, \frac{f(1)+\sqrt{f^2(1) - 4}}{\sqrt[4]{(f^2(0)-4)(f^2(1)-4)}}
 \label{kac1}
\eeq
\end{theo}
\noindent
In \S\ref{sec.proofs} we will repeat Kac's proof of this theorem,  with a few details that Kac omitted.  We will then show how his proof can be modified for the two theorems below.

\begin{rem}
Kac's paper \cite{ka69} contains a typo in the formula for $\dsp\lim_{n\rightarrow\infty}\frac{D_n(f)}{G(f)^n}$.  His formula (eqn (3.15) in \cite{ka69}) is missing the factor $1/2$.
\end{rem}

\begin{rem}
Kac's result can be viewed as a Szeg\"o Strong Limit Theorem (SSLT) for the matrices in \eqref{DSOdef}. In the past few decades, the SSLT has been extensively used to study the spectral theory of discrete Schr\"odinger operators.  See, for example, the recent book of Simon \cite{si11} and the references therein.
\end{rem}

\begin{rem}
Theorem \ref{kacformula} holds under the slightly weaker condition that $f\in C^{1+\alpha}([0,1])$  for some $\alpha>0 $.  Kac's proof can easily be modified in this case, but it  involves some tedious technicalities, which we omit. However, see Remark~\ref{1palpha} after the proof of Theorem~\ref{kacformula} for the outline of how to modify the proof for this case.
\end{rem}

Our first extension of Theorem~\ref{kacformula} has to do with shifting the indexing.  Kac \cite{ka69} noted that if one shifts the indexing by 1, one obtains a different formula for $\lim D_n(f)/G(f)^n$.  We extend this to any shift.

\begin{theo}
\label{scaletheorem}
Let $f$ be twice differentiable on some open interval $I$ containing $[0,1]$.  Suppose $f$ has  a bounded second derivative and satisfies $f>2$.  Let $\ve\in \mathbb{R}$ and define 
the matrices
\[
T_{n}(f;\ve ) =  \begin{bmatrix} f(\frac{\ve }{n}) & -1 & 0 & \cdots & 0 \\
-1 & f(\frac{1+\ve }{n}) & -1 & \cdots & 0 \\
0 & -1 & f(\frac{2+\ve }{n}) & \cdots & 0 \\
 &  &  & \ddots
 \\
0 & 0 & 0 & \cdots & f(\frac{n-1+\ve }{n}) 
\end{bmatrix}
\]
Then 
\beq 
 \lim_{n \to \infty}  \frac{\det {T}_{n}(f;\ve)}{G(f)^n}   =\frac{\dsp \left( f(0)+\sqrt{f^2(0 ) - 4}\right)^{1-\ve} \left( f(1)+\sqrt{f^2(1 ) - 4}\right)^{\ve} }{ 2 \dsp
 {\sqrt[4]{(f^2(0)-4)(f^2(1)-4)}}}
\label{dsoe}
\eeq
\end{theo}

\medskip
\begin{rem}
When $\ve =1$, \eqref{dsoe} reduces to \eqref{kac1}.
As long as $f(0)\neq f(1)$, the above limit can be made to converge to any positive number just by choosing the correct shift $\ve$.  Notice that the limiting statistical distribution of the eigenvalues of $T_n(f;\ve)$ does not depend on $\ve$.  The result \eqref{First KMS}   holds for $T_n(f;\ve)$ for any $\ve$:
\[ \lim_{n\rightarrow\infty} \frac{ \text{Tr}[\varphi(T_n(f;\ve ))]}{n} = \frac{1}{2\pi} \int_0^1 \int_0^{2\pi} \varphi(f(x)-2\cos t) \, dt \, dx 
\]
This is therefore an example of a family of matrices whose asymptotic eigenvalue distribution is invariant, but the asymptotics of the determinants can be made to converge to any positive number.  
\label{shiftremark}
\end{rem}

\begin{rem}
The formulas \eqref{kac1} and \eqref{dsoe} remain unchanged if we take the super-and sub-diagonals to be $+1$, instead of $-1$. 
\end{rem}

Given how sensitive the limits \eqref{kac1} and \eqref{dsoe} are to the slightest change, it might seem that we have no hope of deriving a limit for the determinant  when the symbol is discontinuous.  However, there is one important case when we can do it.
When the function $f$ has a finite number of jump discontinuities, we can obtain a formula for $D_n(f)/G(f)^n$ that depends on $n$ (modulo $o(1)$ terms).  This demonstrates the impossibility of the limit of $D_n(f)/G(f)^n$ existing when $f$ is discontinuous.

\begin{theo}
(i) Let $f$ be twice differentiable on $[0,1]$, with a bounded second derivative,  except for  $r<\infty$ jump discontinuities at $c_1,\dots, c_r \in (0,1)$, where both sided limits exist and are finite, and $f$ is left-continuous at $c_j$: $f(c_j)=f(c_j-)$.  Suppose, also, that  $f>2+\epsilon$ for some $\epsilon > 0$.  Then

\beq
  \frac{D_n(f)}{G(f)^n} = \alpha \prod_{j=1}^r \beta_j \gamma_j^{\{nc_j\}} + o(1)
  \label{jumpeqn}
\eeq
where $\{x\}=x-\lfloor x \rfloor$ is the fractional part of $x$,
\[\alpha = \frac{1}{2} \frac{f(1)+\sqrt{f^2(1) - 4}}{\sqrt[4]{(f^2(0)-4)(f^2(1)-4)}} 
\]
as in \eqref{kac1}, 
\begin{align*}
\beta_j &= \frac{f(c_j-)-f(c_j+) + \sqrt{f^2(c_j+)-4} + \sqrt{f^2(c_j-)-4}}{2\sqrt[4]{(f^2(c_j+)-4)(f^2(c_j-)-4)}}
 \\[.1in]
\mbox{and } \quad \gamma_j &= \frac{f(c_j+)+\sqrt{f^2(c_j+)-4}}{f(c_j-)+\sqrt{f^2(c_j-)-4}}
\end{align*}

\smallskip
\noindent
(ii) If $f$ is right-continuous at $c_j$, then the formula \eqref{jumpeqn} holds with $\{c_jn\}$ replaced by $\{c_jn\}'$, where 
\[ \{x\}' =
1+x-\lceil x\rceil
\]
is the fractional part of $x$, but equal to $1$ if $x$ is an integer.
\label{jumptheorem}
\end{theo}

\begin{rem}
Note that $\beta_j$ and $\gamma_j$ are $1$ if $f$ is continuous at $c_j$, so \eqref{jumpeqn} reduces to \eqref{kac1} when $f$ is smooth.  Since $\{ c_jn\} =\{c_jn\}'$ if $c_j$ is irrational,  the difference between cases (i) and (ii) of the above theorem only occurs when $c_j$ is rational.  In that case the difference arises when $f$ is evaluated at the point $c_j$.  
\end{rem}

\begin{rem}
Obviously, if there is a discontinuity in $f$, the limit
\[ \lim_{n\rightarrow\infty} \frac{D_n(f)}{G(f)^n}
\]
does not exist.  However, we can calculate the $\limsup$ and $\liminf$.  For example, if there is one jump discontinuity at  $c=p/q$,
\begin{align*}
\limsup_{n\rightarrow\infty} \frac{D_n(f)}{G(f)^n} &= \alpha\cdot \beta\cdot \max\{ \gamma^{1/q}, \gamma\}
\\
\liminf_{n\rightarrow\infty} \frac{D_n(f)}{G(f)^n} &= \alpha\cdot \beta \cdot \min\{ \gamma^{1/q}, \gamma\}
\end{align*}
If $c$ is irrational, the same is true with $\gamma^{1/q}$ replaced by $1$.  Analogous statements hold when there are $r$ jump discontinuities.
\end{rem}

To illustrate the asymptotic behavior of $D_n(f)/G(f)^n$, we
consider the case of a single jump discontinuity at $c\in (0,1)$.
If $c =p/q$ is rational,  $\{D_n(f)/G(f)^n\}$ (modulo an $o(1)$ term) is cyclic of order $q$.    When $c$ is irrational, $\{D_n(f)/G(f)^n\}$ is dense on the interval  between $\alpha\beta$ and $\alpha\beta\gamma$.  This is another indication of how exquisitely sensitive  $D_n(f)/G(f)^n$ is.  The slightest irrational perturbation of the point of discontinuity from $c=1/2$, causes the values of $D_n(f)/G(f)^n$ (modulo the $o(1)$ term) to go from alternating between two values to taking on infinitely many values.  This behavior is illustrated in figure~\ref{jumpfig}.  There we calculate $D_n(f)/G(f)^n$ for the piecewise function
\beq 
f(x) = \begin{cases} 3.3+x^2/2 + \sin(3 x) & x \leq c \\
3.5-x & x > c \end{cases}
\label{ff}
\eeq
We compare the values of $D_n(f)/G(f)^n$ with $\alpha\beta\gamma^{\{cn\}'}$.  Agreement is quite good for moderately large $n$.

\begin{figure}[h]
\begin{center}
%
\includegraphics*[width=1.6in]{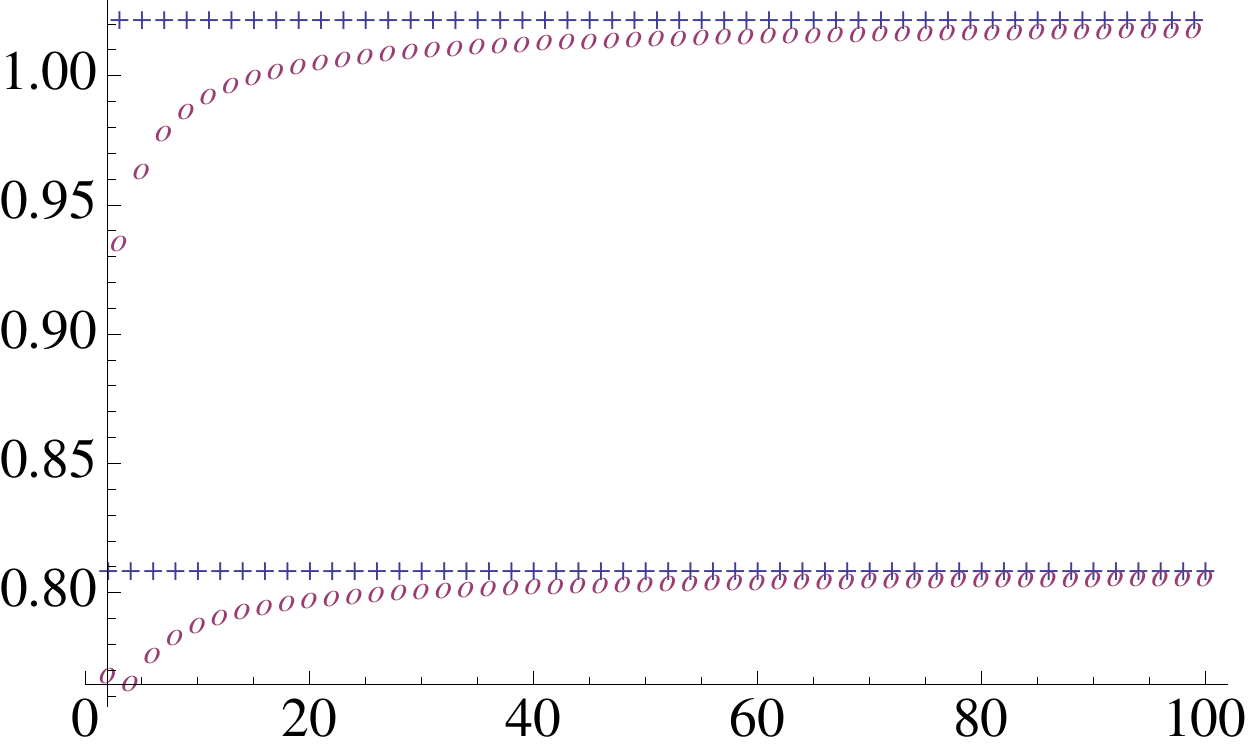}
\includegraphics*[width=1.6in]{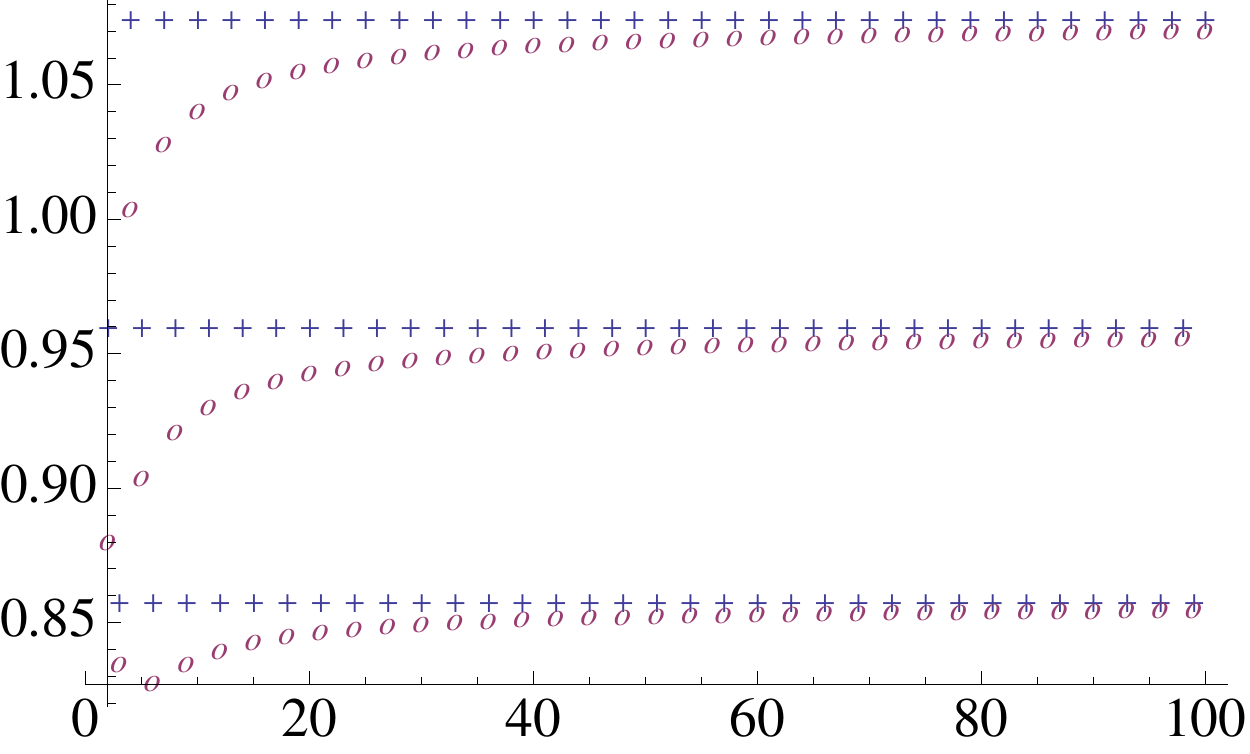}
\includegraphics*[width=1.6in]{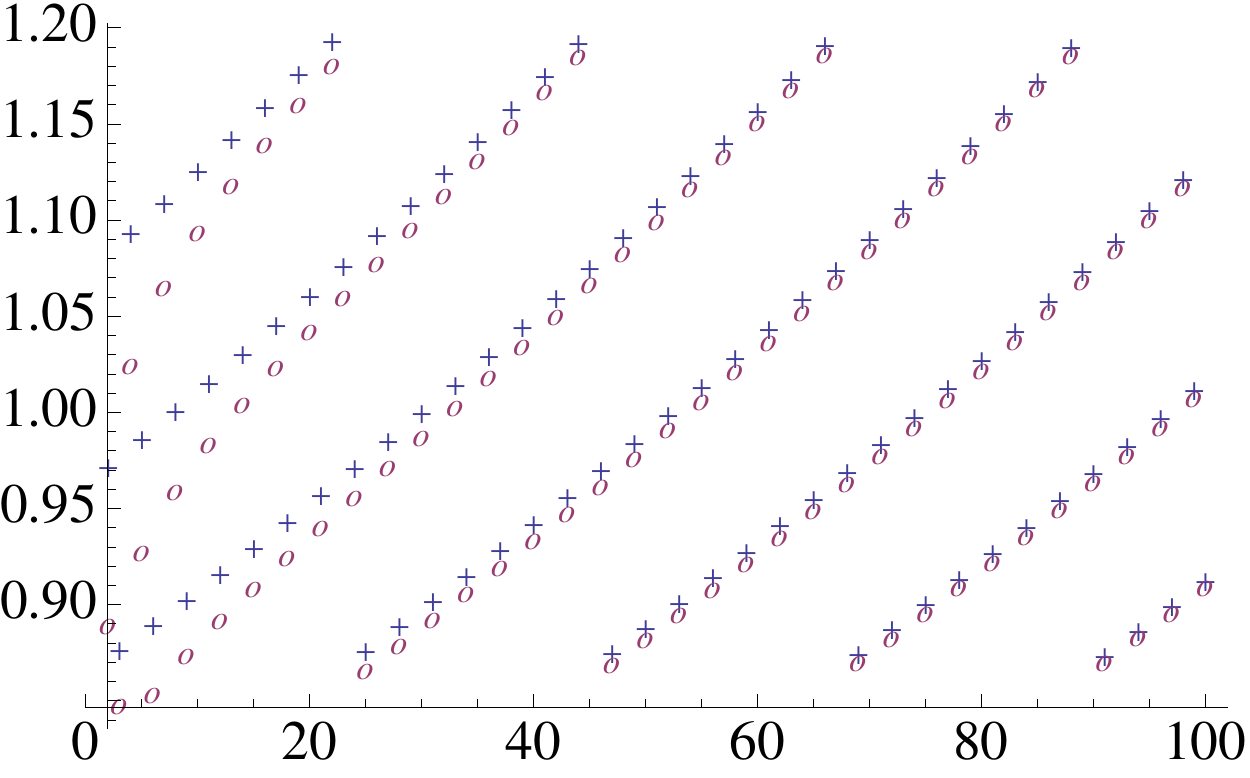}
\end{center}
\caption{$n$ vs. $D_n(f)/G(f)^n$  for $f$ as in \eqref{ff}.  Left: $c=1/2$; middle: $c=1/3$; right: $c=1/\pi$.   The values of $D_n(f)/G(f)^n$ are marked with circles;  the values of $\alpha\beta\gamma^{\{cn\}'}$ are marked with $+$'s.  Note that the values of $c$ in the middle and right panels differ by less than $.0151$.}
\label{jumpfig}
\end{figure}

As another example, to illustrate the behavior of the $o(1)$ error in \eqref{jumpeqn}, we take the function
\beq
f(x) = \begin{cases} 3.3+x^2/2 + \sqrt{x}\sin(13 x) & x \leq .9-1/\pi \\
3.5 - \cos(20 x) & x > .9-1/\pi
\end{cases}
\label{ff2}
\eeq
In figure~\ref{jumpfig2} we plot the
values of $D_n(f)/G(f)^n$ and $\alpha\beta\gamma^{\{cn\}}$ for $n$ to 200 (left panel) and the difference between these values for $n$ to 3000 (right panel).  Among power laws and exponential functions of the form $A B^n$ and $An^{b}$, we found the best least square fit to the data $\left\{ \left(n,D_n(f)/G(f)^n - \alpha\beta\gamma^{\{cn\}} \right)\right\}$ in figure~\ref{jumpfig2} to be  $2.82506\, n^{-0.965199}$.  In other words, the error approaches zero like $1/n$.

\begin{figure}[h]
\begin{center}
%
\includegraphics*[width=2.5in]{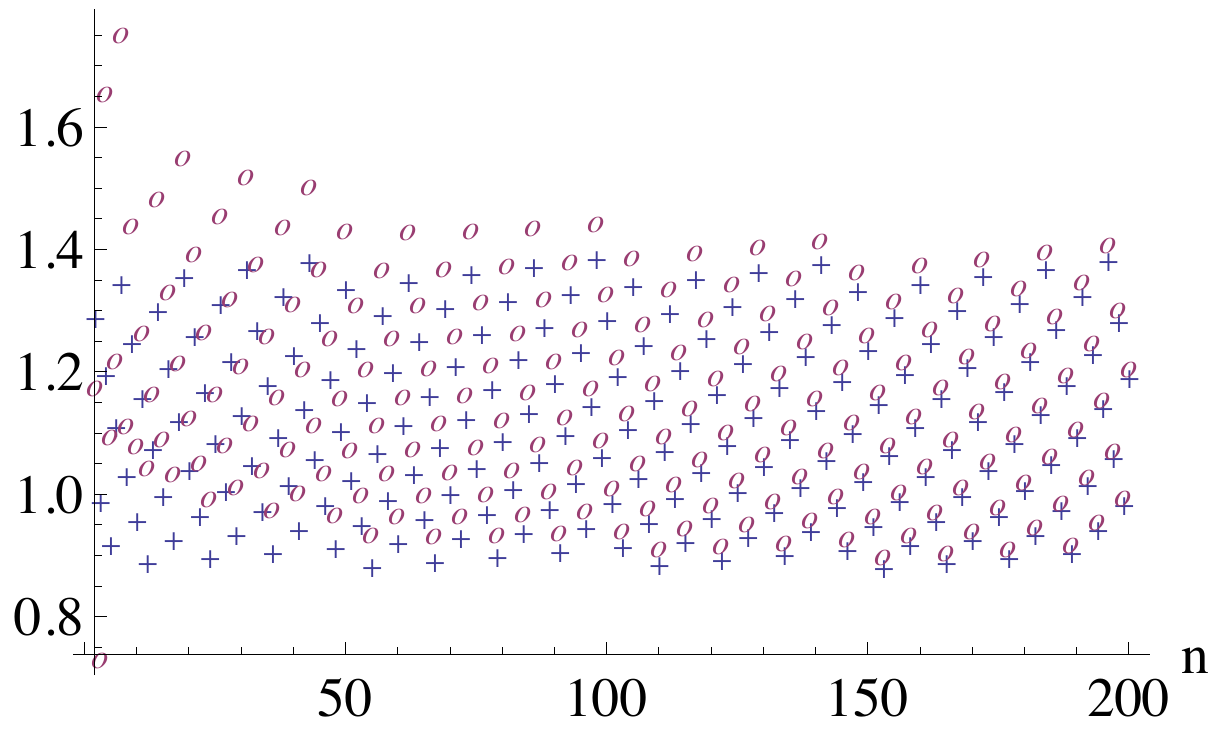}
\includegraphics*[width=2.3in]{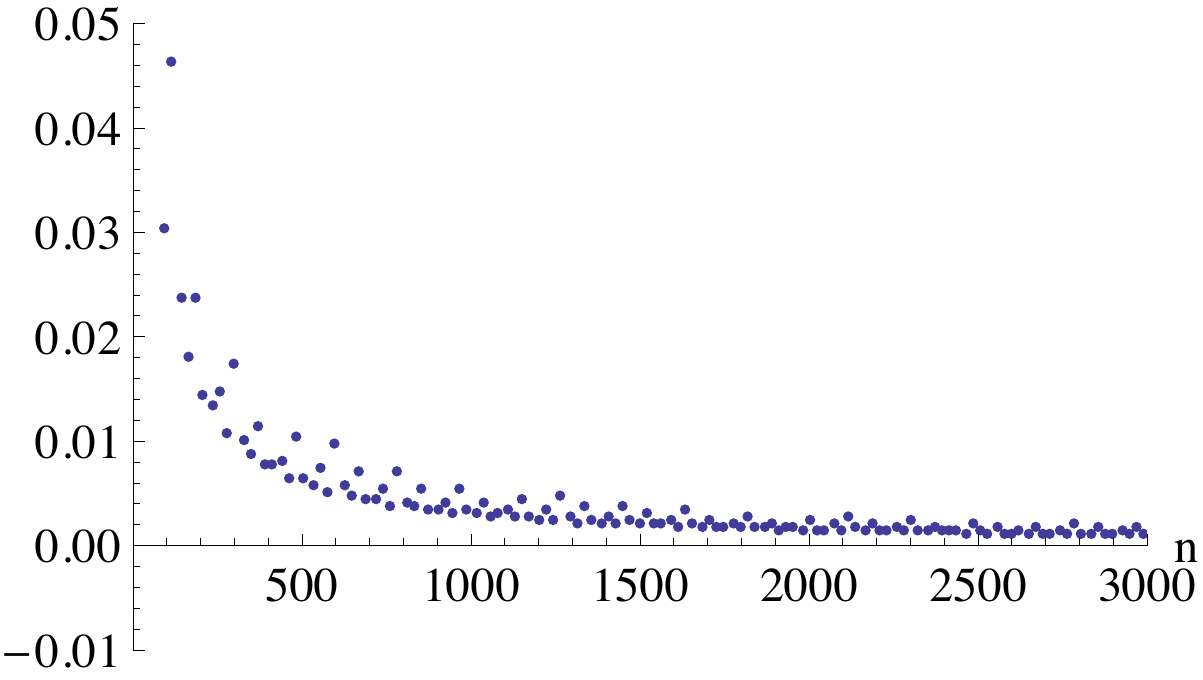}
\end{center}
\caption{Left:  $D_n(f)/G(f)^n$ (circles) and $\alpha\beta\gamma^{\{cn\}}$ ($+$'s)  for $f$ as in \eqref{ff2}.  Right:  $D_n(f)/G(f)^n - \alpha\beta\gamma^{\{cn\}}$ in steps of 23.}
\label{jumpfig2}
\end{figure}

\section{Proofs of main results}
\label{sec.proofs}
%
%

\begin{proof}[Proof of Theorem \ref{kacformula}]
Kac's proof begins with the formula for the determinant of a positive definite matrix $A$. Let $\lambda_1,\ldots,\lambda_n$ be the eigenvalues of $A$. From 
$$ \int_{-\infty}^\infty e^{-\lambda_k t^2} \, dt = \frac{\sqrt{\pi}}{\sqrt{ \lambda_k}},$$
and the spectral theorem, we see    
\begin{align} 
\label{kac2a}
\frac{1}{\sqrt{\det A}} & =   \frac{1}{\left(\sqrt{\pi}\right)^n} \int_{-\infty}^{\infty} \cdots \int_{-\infty}^{\infty} e^{-(\lambda_1y_1^2+\cdots+\lambda_ny_n^2)}  \, dy_1dy_2\cdots dy_n  \nonumber \\
  &=  \frac{1}{\left(\sqrt{\pi}\right)^n} \int_{-\infty}^{\infty} \cdots \int_{-\infty}^{\infty} \exp\left[- x^TAx \right] \, dx_1dx_2\cdots dx_n.
\end{align}
Moreover, the asymptotic expansion of the complementary error function implies that
$$ \int_{n^{1/4}}^{\infty} e^{-\lambda_k t^2} \, dt  = \Oo \left(  \frac{e^{-n^{1/2}}}{n^{1/4}} \right),$$
and hence
\begin{equation} \label{kac3}
  \frac{1}{\sqrt{\det A}} = \int_{-n^{1/4}}^{n^{1/4}} \cdots \int_{-n^{1/4}}^{n^{1/4}}  \exp \left[- x^TAx \right] \, dx_1dx_2\cdots dx_n + o(1).
\end{equation}  
This estimate will play a role below when we approximate the above integral. 
If we apply \eqref{kac2a} to $D_n(f)$, we obtain 
\begin{align}
\frac{1}{\sqrt{D_n(f)}} & = \frac{1}{\left(\sqrt{\pi}\right)^n} \int_{-\infty}^{\infty} \cdots \int_{-\infty}^{\infty}
\nonumber \\
& \quad
\exp\left[-\sum_{k=1}^n f\left(\frac{k}{n}\right) x_k^2  + 2 \sum_{k=1}^{n-1}x_kx_{k+1}\right] dx_1\cdots dx_n
\label{dfintegral}
\end{align}
To obtain a limit  we need to write the integrand as a product of symmetric kernels.   To this end, we note
\begin{multline}
f\left(\frac{k}{n}\right) =  \frac{1}{2}f\left(\frac{k-1}{n}\right) + \frac{1}{4n}f'\left(\frac{k-1}{n}\right) \\
 + \frac{1}{2}f\left(\frac{k}{n}\right)
+ \frac{1}{4n}f'\left(\frac{k}{n}\right)
+ \Oo\left(\frac{1}{n^2}\right)
\label{fexp}
\end{multline}
Let
\[ a_k = f\left(\frac{k}{n}\right) + \frac{1}{2n}f'\left(\frac{k}{n}\right)
\]
Note that $a_k > 2$ for $n$ large enough.
Then, by \eqref{fexp},  we have
\begin{align}
\sum_{k=1}^n f\left(\frac{k}{n}\right) x_k^2  &= \frac{1}{2}\left(a_0 x_0^2+a_n x_n^2\right)  \nonumber \\
 & + \frac{1}{2} \sum_{k=1}^{n-1} a_k \left(x_k^2+x_{k+1}^2\right) + \Oo\left(\sum_{k=1}^n \frac{x_k^2}{n^{2}} \right)
\label{fsum}
\end{align}
Thus, if we define the symmetric kernels
\begin{equation*}
K\left(x,y;\frac{k}{n}\right) = \frac{1}{\sqrt{\pi}} \exp\left[-\frac{a_k}{2} x^2 + 2xy - \frac{a_k}{2} y^2\right]
\end{equation*}
then we have
\begin{align*}
\frac{\sqrt{\pi}}{\sqrt{D_n(f)}} &=   \int_{-\infty}^{\infty} \cdots \int_{-\infty}^{\infty}
\\
& \quad
e^{-\frac{a_1}{2}x_1^2} \prod_{k=1}^{n-1}K\left(x_k, x_{k+1};\frac{k}{n}\right) e^{-\frac{a_n}{2} x_n^2}\, e^{\Oo \left(\frac{ \sum_{k=1}^n x_k^2}{n^{2}} \right)}
dx_1\cdots dx_n .
\end{align*}
The $\Oo$ term plays essentially no role for large $n$ and can therefore be removed. Indeed, it follows from \eqref{kac3}
\begin{align}
  & \frac{\sqrt{\pi}}{\sqrt{D_n(f)}} \nonumber \\
  & =   \int_{-\infty}^{\infty} \cdots \int_{-\infty}^{\infty}
e^{-\frac{a_1}{2}x_1^2} \prod_{k=1}^{n-1}K\left(x_k, x_{k+1};\frac{k}{n}\right) e^{-\frac{a_n}{2} x_n^2}\, e^{\Oo \left(\frac{ \sum_{k=1}^n x_k^2}{n^{2}} \right)}
dx_1\cdots dx_n  
\nonumber 
\\
& =  \int_{-n^{1/4}}^{n^{1/4}} \cdots \int_{-n^{1/4}}^{n^{1/4}}
\nonumber \\
& \qquad
e^{-\frac{a_1}{2}x_1^2} \prod_{k=1}^{n-1}K\left(x_k, x_{k+1};\frac{k}{n}\right) e^{-\frac{a_n}{2} x_n^2}\, e^{\Oo \left(\frac{ \sum_{k=1}^n x_k^2}{n^{2}} \right)}
dx_1\cdots dx_n  + o(1) 
\nonumber
\\
& =   \int_{-n^{1/4}}^{n^{1/4}} \cdots \int_{-n^{1/4}}^{n^{1/4}}
\nonumber \\
& \qquad
e^{-\frac{a_1}{2}x_1^2} \prod_{k=1}^{n-1}K\left(x_k, x_{k+1};\frac{k}{n}\right) e^{-\frac{a_n}{2} x_n^2}\, e^{\Oo \left( n^{-1/2} \right)}
dx_1\cdots dx_n  + o(1) 
\nonumber
\\
& =  \int_{-\infty}^{\infty} \cdots \int_{-\infty}^{\infty} e^{-\frac{a_1}{2}x_1^2} \prod_{k=1}^{n-1}K\left(x_k, x_{k+1};\frac{k}{n}\right) e^{-\frac{a_n}{2} x_n^2}\,  dx_1\cdots dx_n  + o(1).
\label{detTna}
\end{align}

Using the self-reciprocity of the Hermite polynomials, one can easily compute the eigenvalues and eigenfunctions of $\frac{1}{\sqrt{\pi}} \, K(x,y;k/n)$. The eigenvalues are 
\begin{equation*} \label{eigenvalue}
 \lambda_j(k/n) = \left( \frac{a_k+b_k}{2} \right)^{-j-1/2} \qquad (j=0,1,2,...)
 \end{equation*}
with $b_k=\sqrt{a_k^2-4}$, and the corresponding normalized eigenfunctions 
\begin{equation*}
  \phi_j(x;k/n) = \sqrt[4]{\frac{b_k}{\pi}} \, \frac{1}{ 2^{j/2} \sqrt{j!}} \, e^{- \frac{b_k}{2} \, x^2} \, H_j(\sqrt{b_k} \, x) \qquad (j=0,1,2,...)
\end{equation*} 
where $H_j$  is the $j$th Hermite polynomial \cite[Cf. Rem. 6.1.1]{anasro99}. Since $b_{k+1}-b_k=\Oo(n^{-2})$,  the normalized eigenfunctions ``almost commute'' in the following sense:
\begin{equation} \label{comm}
\int_{-\infty}^{\infty} \phi_i\left(x;\frac{k}{n}\right) \phi_j\left(x;\frac{k+1}{n}\right) dx = \delta_{ij} + \Oo\left(\frac{1}{n^{2}}\right)
\end{equation}
For any given $k$ and $n$, the collection $\{\phi_0(x;k/n),\phi_1(x;k/n),\ldots \}$ form a Hilbert basis of $L^2(\R)$. Hence, every function $g \in L^2(\R)$ can be written as
\[ g(x)  = \sum_{j=0}^\infty \left( \int_{-\infty}^\infty g(y) \phi_j(y;k/n) \ dy \right)  \phi_j(x;k/n) \qquad (k=1,...,n-1) \]
in the $L^2$-sense. If we assume furthermore that $g$ is a Schwartz function, then the above series converges pointwise for all $x$. Therefore, we can apply the Dominated Convergence Theorem to obtain
\begin{multline*}
\frac{1}{\sqrt{\pi}} \int_{-\infty}^{\infty} K\left(x_{n-1},x_{n};\frac{n-1}{n}\right) e^{-\frac{a_n}{2} \, x_n^2} \, dx_{n} \\
= \sum_{j=0}^\infty  \lambda_j \left( \frac{n-1}{n} \right) \left[ \int_{-\infty}^{\infty}  \phi_j\left(x_{n}; \frac{n-1}{n} \right) e^{-\frac{a_n}{2}} \, x_n^2\, dx_{n} \right] \ \phi_j\left(x_{n-1};\frac{n-1}{n}\right) 
\end{multline*}
Iterating this with  $\frac{1}{\sqrt{\pi}} K(x_{n-2},x_{n-1};\frac{n-2}{n}),  \ldots,  \frac{1}{\sqrt{\pi}} K(x_{1},x_{2};\frac{1}{n})$, and using the almost commuting relations in \eqref{comm},  it follows  
\begin{align*}
 \frac{\sqrt{\pi}}{\sqrt{D_n(f)}} &= \sum_{j=0}^\infty \left[ 
 \prod_{k=1}^{n-1} \lambda_j\left(\frac{k}{n}\right) \cdot
\int_{-\infty}^{\infty} \phi_j \left(x;\frac{1}{n} \right) e^{-\frac{a_1}{2}x^2} dx \right.
\nonumber \\
& \qquad \times \left. \int_{-\infty}^{\infty} \phi_j \left(x;\frac{n-1}{n} \right) e^{-\frac{a_n}{2}x^2} dx \right]
 + \Oo \left( \frac{1}{n} \right).
\end{align*}
From the non-degeneracy of the eigenvalues, the above series is dominated by the leading term $(j=0)$ as $n$ gets arbitrary large.  Using the facts that $a_{n} \rightarrow f(1)$ and $a_1 \rightarrow f(0)$, we conclude 
\begin{align}
 \frac{\sqrt{\pi}}{\sqrt{D_n(f)}} &= 
 \prod_{k=1}^{n-1} \lambda_0\left(\frac{k}{n}\right) \cdot
\int_{-\infty}^{\infty} \phi_0(x;0) e^{-\frac{f(0)}{2}x^2} dx 
\nonumber \\
& \qquad \times  \int_{-\infty}^{\infty} \phi_0(x;1) e^{-\frac{f(1)}{2}x^2} dx 
 + o(1).
\label{detTna2}
\end{align}
From the expressions for $\lambda_0$ and $\phi_0$ and after evaluating the integrals in \eqref{detTna2}, we finally arrive at
\begin{align}
\lim_{n\rightarrow\infty} \frac{D_n(f)}{\prod_{k=1}^{n-1} \left( \frac{a_k}{2} + \sqrt{\left(\frac{a_k}{2}\right)^2 - 1}\right)} 
&= \frac{1}{4} \cdot \frac{f(0)+\sqrt{f(0)^2 - 4}}{ \sqrt[4]{f(0)^2 - 4}}
\nonumber
\\
& \qquad \times 
 \frac{f(1)+\sqrt{f(1)^2 - 4}}{ \sqrt[4]{f(1)^2 - 4}}
 \label{kac2}
\end{align}
Now we need to evaluate the product in the denominator.  We write it as the exponential of the sum of logarithms.
Expanding
\begin{align*}
 \log\left(\frac{a_k}{2}+\sqrt{\left(\frac{a_k}{2}\right)^2-1}\right) & = \log\left(\frac{f\left(\frac{k}{n}\right) + \sqrt{f^2\left(\frac{k}{n}\right)-4}}{2}\right)
\\
& \quad + \frac{1}{2n}\frac{f'\left(\frac{k}{n}\right)}{\sqrt{f^2\left(\frac{k}{n}\right)-4}} + \Oo\left(\frac{1}{n^{2}}\right)
\end{align*}
Now, we have the Riemann sum:
\begin{align}
 \sum_{k=1}^{n-1} \frac{1}{2n}\frac{f'\left(\frac{k}{n}\right)}{\sqrt{f^2\left(\frac{k}{n}\right)-4}} 
&= \frac{1}{2} \int_0^1 \frac{f'\left(s\right)}{\sqrt{f^2\left(s\right)-4}} ds  + \Oo\left(\frac{1}{n}\right)
\nonumber \\
&= \left. \frac{1}{2} \log\left(\frac{f(s)+\sqrt{f^2(s)-4}}{2}\right) \right|_{s=0}^{s=1} + \Oo\left(\frac{1}{n}\right)
\label{RS1}
\end{align}
Next we use the Euler-Maclaurin formula
\beq 
\sum_{k=1}^{n-1}g\left(\frac{k}{n}\right) = n\int_0^1 g(s)\, ds - \frac{g(0)+g(1)}{2} + \Oo\left(\frac{1}{n}\right) \nonumber
\label{EM}
\eeq
with $g(x) = \log\left(\frac{f(x)+\sqrt{f^2(x)-4}}{2}\right)$ to get
\begin{align}
\sum_{k=1}^{n-1} \log\left(\frac{f\left(\frac{k}{n}\right) + \sqrt{f^2\left(\frac{k}{n}\right)-4}}{2}\right)
&= n\int_0^1 \log\left(\frac{f(s)+\sqrt{f^2(s)-4}}{2}\right) ds
\nonumber
\\
& \quad -\frac{g(0)+g(1)}{2} + \Oo\left(\frac{1}{n}\right)
\label{EM2}
\end{align}
Combining \eqref{RS1} and \eqref{EM2} gives us
\begin{align*}
\sum_{k=1}^{n-1} \log\left(\frac{a_k}{2}+\sqrt{\left(\frac{a_k}{2}\right)^2-1}\right) 
& = n\int_0^1 \log\left(\frac{f(s)+\sqrt{f^2(s)-4}}{2}\right) ds
\nonumber
\\
& \quad -  \log\left(\frac{f(0)+\sqrt{f^2(0)-4}}{2}\right) + \Oo\left(\frac{1}{n}\right)
\end{align*}
Thus, the product in the denominator of \eqref{kac2} is
\[\prod_{k=1}^{n-1} \left( \frac{a_k}{2} + \sqrt{\left(\frac{a_k}{2}\right)^2 - 1}\right)
= G(f)^n\left\{\left(\frac{f(0)+\sqrt{f^2(0)-4}}{2}\right)^{-1} + o(1)\right\}
\]
Combining this with \eqref{kac2} gives us \eqref{kac1}.
\end{proof}

\begin{rem}
In the above proof, 
it is sufficient for $f\in C^{1+\alpha}([0,1])$ for some $\alpha > 0$.  For the approximation of the integral
\eqref{dfintegral} in terms of symmetric kernels, it is enough for the error in \eqref{fexp} to be $\Oo\left(n^{-1-\alpha}\right)$ for any $\alpha > 0$.  $f\in C^{1+\alpha}$ guarantees this condition.  For the remainder of the proof, one has to keep track of the error and introduce a modest generalization of the Eulr-Maclaurin formula.
\label{1palpha}
\end{rem}

For the proof of Theorem~\ref{scaletheorem}, we need the following lemma.
\begin{lem}   \label{shiftlemma}
Let $g$ be twice differentiable with a bounded second derivative on  an open interval $I$ containing $[0,1]$. Fix $\ve\in\mathbb{R}$.  Then
\begin{align}
\sum_{k=1}^{n-1} g\left(\frac{k-1+\ve}{n}\right) & = n \int_0^1 g(x) dx 
\nonumber
\\
& \quad + \left( \ve - \frac{3}{2}\right) g(1) + \left( \frac{1}{2}-\ve\right) g(0) + \Oo\left(\frac{1}{n}\right) \nonumber
\label{shiftEM}
\end{align}
\end{lem}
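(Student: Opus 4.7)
The plan is to reduce the lemma to the standard Euler-Maclaurin formula (as already used in the proof of Theorem~\ref{kacformula}) by first absorbing the shift $\ve$ via a Taylor expansion. After the re-indexing $j=k-1$, the sum to estimate is
\[
S_n \eqdef \sum_{j=0}^{n-2} g\!\left(\frac{j+\ve}{n}\right).
\]
Because $g''$ is bounded on an open interval containing $[0,1]$, for $n$ large enough every argument $(j+\ve)/n$ lies in a compact subset of $I$, and Taylor's theorem gives uniformly
\[
g\!\left(\frac{j}{n}+\frac{\ve}{n}\right)
= g\!\left(\frac{j}{n}\right) + \frac{\ve}{n}\, g'\!\left(\frac{j}{n}\right) + \Oo\!\left(\frac{1}{n^2}\right).
\]
Summing $n-1$ such terms contributes only $\Oo(1/n)$ from the remainder, so
\[
S_n = \sum_{j=0}^{n-2} g\!\left(\frac{j}{n}\right) + \frac{\ve}{n}\sum_{j=0}^{n-2} g'\!\left(\frac{j}{n}\right) + \Oo\!\left(\frac{1}{n}\right).
\]

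Next I would evaluate each of the two sums. For the first, I would rewrite
\[
\sum_{j=0}^{n-2} g\!\left(\frac{j}{n}\right) = g(0) + \sum_{j=1}^{n-1} g\!\left(\frac{j}{n}\right) - g\!\left(\frac{n-1}{n}\right),
\]
apply the Euler-Maclaurin formula \eqref{EM} that the authors already cite,
\[
\sum_{j=1}^{n-1} g(j/n) = n\int_0^1 g(x)\,dx - \frac{g(0)+g(1)}{2} + \Oo(1/n),
\]
and use the mean-value-type estimate $g((n-1)/n) = g(1) + \Oo(1/n)$ (valid because $g'$ is bounded). Together these yield
\[
\sum_{j=0}^{n-2} g(j/n) = n\int_0^1 g(x)\,dx + \tfrac{1}{2}g(0) - \tfrac{3}{2}g(1) + \Oo(1/n).
\]
For the second sum, the boundedness of $g''$ makes $g'$ Lipschitz, so the Riemann sum estimate yields
\[
\frac{\ve}{n}\sum_{j=0}^{n-2} g'(j/n) = \ve\int_0^1 g'(x)\,dx + \Oo(1/n) = \ve\bigl(g(1)-g(0)\bigr) + \Oo(1/n).
\]

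Finally I would add the two contributions, collecting the coefficients of $g(0)$ and $g(1)$, to obtain
\[
S_n = n\int_0^1 g(x)\,dx + \left(\tfrac{1}{2}-\ve\right)g(0) + \left(\ve - \tfrac{3}{2}\right)g(1) + \Oo(1/n),
\]
which is the claimed identity.

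There is no genuine obstacle here; the lemma is a bookkeeping strengthening of Euler-Maclaurin to accommodate both an arbitrary real shift $\ve$ and the asymmetric summation range $1,\dots,n-1$. The only point requiring care is keeping the endpoint corrections straight: the $-g((n-1)/n)$ term forces the unusual coefficient $\ve - 3/2$ rather than the symmetric $\ve - 1/2$ one might initially expect, and the Taylor-expansion contribution $\ve(g(1)-g(0))$ must be combined with the trapezoidal boundary terms before the final coefficients emerge.
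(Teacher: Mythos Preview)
Your argument is correct, but it proceeds differently from the paper's. The paper applies the Euler--Maclaurin formula directly to the shifted sum, obtaining $n\int_{(\ve-1)/n}^{(n-2+\ve)/n} g(x)\,dx$ plus boundary terms, and then splits this integral as $n\int_0^1 g + n\int_1^{(n-2+\ve)/n} g + n\int_{(\ve-1)/n}^0 g$, approximating the two small tail integrals by $(\ve-2)g(1)$ and $(1-\ve)g(0)$ respectively. You instead Taylor-expand the shift away first, writing $g((j+\ve)/n)=g(j/n)+(\ve/n)g'(j/n)+\Oo(n^{-2})$, and then handle the resulting unshifted sum by \eqref{EM} and the derivative sum as a Riemann sum for $\int_0^1 g'=g(1)-g(0)$. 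The paper's route is marginally shorter (one Euler--Maclaurin call, no Taylor step), while yours makes the dependence on $\ve$ more transparent: the $\ve$-coefficients visibly arise from integrating $g'$ rather than from adjusting integration limits. Both are entirely elementary and neither has a real advantage in generality.
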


\begin{proof}
Let $n$ be large enough so that $\left[\frac{\ve -1}{n}, \frac{n-2+\ve}{n}\right] \subset I$.  Then, 
by the Euler-Maclaurin formula
\begin{align*}
\sum_{k=1}^{n-1} g\left(\frac{k-1+\ve}{n}\right) &= n \int_{\frac{\ve-1}{n}}^{\frac{n-2+\ve}{n}} g(x) dx
+\frac{1}{2}\left(g(1)-g(0)\right) + \Oo\left(\frac{1}{n}\right)
\\
&= n\int_0^1 g(x) dx + n \int_1^{\frac{n-2+\ve}{n}} g(x) dx + n \int_{\frac{\ve-1}{n}}^0 g(x) dx
\\
& \quad +\frac{1}{2}\left(g(1)-g(0)\right) + \Oo\left(\frac{1}{n}\right)
\\
&= n\int_0^1 g(x) dx + (\ve -2) g(1) + (1-\ve) g(0)
\\
& \quad +\frac{1}{2}\left(g(1)-g(0)\right) + \Oo\left(\frac{1}{n}\right)
\end{align*}
from which the result follows.
\end{proof}

\begin{proof}[Proof of Theorem~\ref{scaletheorem}]
The proof of Theorem~\ref{kacformula} carries through almost without change as long as we let $n$ be large enough so that $\left[\frac{\ve -1}{n}, \frac{n-2+\ve}{n}\right] \subset I$,  replace $f(k/n)$ with  $f((k-1+\ve)/n)$, and let
\[ a_k = f\left(\frac{k-1+\ve}{n}\right) + \frac{1}{2n}f'\left(\frac{k-1+\ve}{n}\right)
\]
The only difference is equation \eqref{EM2}.  There we will get $\sum_{k=1}^{n-1} g\left(\frac{k-1+\ve}{n}\right)$ for  $g(x) = \log\left(\frac{f(x)+\sqrt{f^2(x)-4}}{2}\right)$ .  Then we apply Lemma~\ref{shiftlemma} in this sum, so that
\begin{align*}
\sum_{k=1}^{n-1} \log\left(\frac{a_k}{2}+\sqrt{\left(\frac{a_k}{2}\right)^2-1}\right) 
& = n\int_0^1 \log\left(\frac{f(s)+\sqrt{f^2(s)-4}}{2}\right) ds
\nonumber
\\
& \quad + \left(\ve - 1\right) g(1) - \ve g(0) + \Oo\left(\frac{1}{2}\right)
\end{align*}
Thus, the product in the denominator of \eqref{kac2} becomes
\begin{align*}
\prod_{k=1}^{n-1}  & \left( \frac{a_k}{2} + \sqrt{\left(\frac{a_k}{2}\right)^2 - 1}\right)
= G(f)^n
\\
& \times \left\{\left(\frac{f(0)+\sqrt{f^2(0)-4}}{2}\right)^{-\ve} \left(\frac{f(1)+\sqrt{f^2(1)-4}}{2}\right)^{\ve-1} + o(1)\right\}
\end{align*}
Combining this with \eqref{kac2} gives us \eqref{dsoe}.
\end{proof}
\medskip
To prove Theorem~\ref{jumptheorem}, we need the following generalization of the Euler-Maclaurin formula.  
\begin{lem} \label{integraljump}
(i) Suppose $g$ is twice differentiable with a bounded second derivative  except for $r<\infty$  jump discontinuities at $0<c_1 < c_2 < \cdots < c_r < 1$.  Assume that both sided limits exist and are finite, and that $g$ is left-continuous at these points.
Then
\begin{align}
\sum_{k=1}^{n-1} g\left(\frac{k}{n}\right) &= n\int_0^1 g(x)\, dx - \frac{g(0)+g(1)}{2} \nonumber \\
 & \quad + \sum_{j=1}^r \left( \{nc_j\} - \frac{1}{2}  \right) \left[g(c_j+)-g(c_j-)\right] \ +   \Oo\left(\frac{1}{n}\right)   
  \label{try}
\end{align}

\noindent (ii) If $g$ is right-continuous at $c_j$, then the formula \eqref{try} holds with $\{nc_j\}$ replaced by $\{nc_j\}'$.
\end{lem}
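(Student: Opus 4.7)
The plan is to split the sum at the discontinuities and apply the standard Euler-Maclaurin formula on each smooth piece, then collect the boundary contributions into the fractional-part terms. Set $c_0=0$, $c_{r+1}=1$, and let $M_j = \lfloor nc_j\rfloor$, $\theta_j = \{nc_j\}$ for $j=1,\ldots,r$, with the conventions $M_0=\theta_0=0$, $M_{r+1}=n$, $\theta_{r+1}=0$. First assume $nc_j \notin \Z$ for every $j$, so $0<\theta_j<1$; for $n$ large the $M_j$ are strictly increasing, and the indices $k\in\{1,\ldots,n-1\}$ with $k/n\in(c_j,c_{j+1})$ are precisely $k=M_j+1,\ldots,M_{j+1}$ (the last range truncated at $n-1$ for $j=r$). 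On each open interval $g$ admits a $C^2$ extension $\tilde g_j$ to $[c_j,c_{j+1}]$ with $\tilde g_j(c_j)=g(c_j+)$ and $\tilde g_j(c_{j+1})=g(c_{j+1}-)$ (using $g(0),g(1)$ at the true endpoints).

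Apply the standard Euler-Maclaurin formula to $S_j := \sum_{k=M_j+1}^{M_{j+1}}\tilde g_j(k/n)$. Its integral part, $n\int_{(M_j+1)/n}^{M_{j+1}/n}\tilde g_j$, differs from $n\int_{c_j}^{c_{j+1}} g$ by the end-shavings of lengths $(1-\theta_j)/n$ and $\theta_{j+1}/n$, contributing $-(1-\theta_j)g(c_j+)-\theta_{j+1}g(c_{j+1}-)+\Oo(1/n)$, while its endpoint halves contribute $\tfrac12 g(c_j+)+\tfrac12 g(c_{j+1}-)+\Oo(1/n)$. The ingredient that makes the error truly $\Oo(1/n)$ (rather than $\Oo(1)$) is one application of Abel/Bernoulli summation against $\tilde g_j''(x/n) = \Oo(1/n^2)$ over an interval of length $\Oo(n)$. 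Combining,
\[
S_j = n\int_{c_j}^{c_{j+1}} g + \bigl(\theta_j-\tfrac12\bigr)g(c_j+) + \bigl(\tfrac12-\theta_{j+1}\bigr)g(c_{j+1}-) + \Oo(1/n).
\]

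Now $\sum_{k=1}^{n-1} g(k/n) = \sum_{j=0}^{r-1}S_j + S_r - g(1)$, the $-g(1)$ correcting $S_r$ for the excluded index $k=n=M_{r+1}$ (at which $\tilde g_r$ equals $g(1)$). Summing, the integrals combine to $n\int_0^1 g$; the outer endpoints produce $-\tfrac12 g(0)$ (from $S_0$ with $\theta_0=0$) and $-\tfrac12 g(1)$ (from $S_r$ after the correction); and at each interior $c_j$ the coefficient $(\tfrac12-\theta_j)$ on $g(c_j-)$ from $S_{j-1}$ combines with $(\theta_j-\tfrac12)$ on $g(c_j+)$ from $S_j$ to yield $(\theta_j-\tfrac12)[g(c_j+)-g(c_j-)]$, establishing (i). The integer case $nc_j\in\Z$ is compatible with (i) using $\theta_j=0$, because $g(M_j/n)=g(c_j-)$ by left-continuity places the index $k=M_j$ correctly in $S_{j-1}$, and the corresponding end-shaving at $c_j$ vanishes. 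For (ii), right-continuity gives $g(M_j/n)=g(c_j+)$ at integer $nc_j$, so $k=M_j$ should instead be claimed by $S_j$; this is effected by replacing $M_j$ by $M_j-1$ in the partition, effectively setting $\theta_j=1$, and the same bookkeeping yields $(\tfrac12)[g(c_j+)-g(c_j-)] = (\{nc_j\}'-\tfrac12)[g(c_j+)-g(c_j-)]$.

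The main obstacle is the careful coordination between the $\Oo(1/n)$-length end-shavings of the piecewise integrals and the Euler-Maclaurin endpoint halves, which must combine to produce precisely the $(\theta_j-\tfrac12)$ coefficients on each side of each jump, and the separate handling of the true endpoints $0$ and $1$ so that the $-\tfrac12(g(0)+g(1))$ term emerges cleanly. Once that accounting is in place, the dichotomy between (i) and (ii) reduces to which partition piece absorbs the index $k=M_j$ when $nc_j$ is an integer, corresponding to $\theta_j=0$ versus $\theta_j=1$.
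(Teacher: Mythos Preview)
Your proof is correct and follows essentially the same approach as the paper: split the sum at the discontinuities, apply the standard Euler--Maclaurin formula on each smooth piece, and collect the boundary contributions. The only cosmetic difference is that where you compute two ``end-shavings'' of lengths $(1-\theta_j)/n$ and $\theta_{j+1}/n$ on either side of each jump, the paper instead writes both piecewise integrals out to the nearest grid points $\lfloor nc\rfloor/n$ and $(\lfloor nc\rfloor+1)/n$ and then subtracts the single small integral $n\int_{\lfloor nc\rfloor/n}^{(\lfloor nc\rfloor+1)/n} g$; the arithmetic is the same.
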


\begin{proof}
(i)  Suppose $g$ has a single jump discontinuity at $c\in (0,1)$ and is left-continuous at $c$.
Apply the Euler-Maclaurin formula \eqref{EM}  for $C^2$ functions 
 to each of the following sums:
\begin{align}
\sum_{k=1}^{n-1}g\left(\frac{k}{n}\right) &= \sum_{k=1}^{\lfloor nc\rfloor} g\left(\frac{k}{n}\right)
+\sum_{k=\lfloor nc\rfloor +1}^{n-1} g\left(\frac{k}{n}\right)
\nonumber
\\
&= n\int_0^{\lfloor nc\rfloor/n} g(x)\, dx + \frac{1}{2} \left[ g\left( \frac{\lfloor nc\rfloor}{n}\right) - g(0)\right] 
\nonumber
\\[.05in]
& \quad +  n \int_{(\lfloor nc\rfloor+1) /n}^1 g(x) \,dx + \frac{1}{2} \left[ -f(1) +  g\left(\frac{\lfloor nc\rfloor+1}{n}\right) \right] + \Oo\left(\frac{1}{n}\right)
\nonumber
\\[.1in]
&= n \int_{0}^1 g(x) \,dx + \frac{1}{2} \left[ -g(1)-g(0) + g\left(\frac{\lfloor nc\rfloor}{n}\right) +  g\left(\frac{\lfloor nc\rfloor+1}{n}\right)  \right] 
\nonumber
\\[.05in]
& \quad -   n \int_{\lfloor nc\rfloor/n}^{(\lfloor nc\rfloor+1) /n} g(x) \,dx  + \Oo\left(\frac{1}{n}\right)
\label{intapprox}
\end{align}
Now,
\begin{align*}
\int_{\lfloor nc\rfloor/n}^{(\lfloor nc\rfloor+1) /n} g(x) \,dx
 &= \left( c - \frac{\lfloor nc\rfloor }{n} \right) g(c-) 
+ \left( \frac{\lfloor nc \rfloor + 1}{n} - c\right) g(c+) 
+ \Oo\left(\frac{1}{n^2}\right)
\\
&= \frac{1}{n}\left\{ \{nc\} \,  \left[g(c-) - g(c+)\right] + g(c+) + \Oo\left(\frac{1}{n}\right)\right\}
\end{align*}
Combining this with \eqref{intapprox}, and using
\[  g\left(\frac{\lfloor nc\rfloor }{n}\right) = g(c-) + \Oo\left(\frac{1}{n}\right), \quad \mbox{ and } \quad
 g\left(\frac{\lfloor nc\rfloor +1 }{n}\right) = g(c+) + \Oo\left(\frac{1}{n}\right),
 \] 
 establishes the result for a single jump discontinuity.

\medskip 
When there are $r>1$ jump discontinuities, we break the sum into $r+1$ parts:
 \[ \sum_{k=1}^{n-1}g\left(\frac{k}{n}\right) = \left(\sum_{k=1}^{\lfloor nc_1\rfloor }
+\sum_{k=\lfloor nc_1\rfloor +1}^{\lfloor nc_2\rfloor}
+ \cdots
+ \sum_{k=\lfloor nc_r\rfloor +1}^{n -1}\right) g\left(\frac{k}{n}\right)
\]
We then apply the Euler-Maclaurin formula to each of the sums.  The proof for each of the sums is identical to that when there is a single jump discontinuity.

\smallskip
\noindent
(ii)  If $f$ is right-continuous at $c$, then we have to break the sum into
\[ \sum_{k=1}^{n-1}g\left(\frac{k}{n}\right) = \sum_{k=1}^{\lceil nc\rceil -1 } g\left(\frac{k}{n}\right)
+\sum_{k=\lceil nc\rceil}^{n-1} g\left(\frac{k}{n}\right)
\]
The rest of the proof proceeds as in  case (i), \textit{mutatis mutandis}.
\end{proof}

With the above lemma, we can now proceed to the proof of Theorem~\ref{jumptheorem} by making the necessary changes to the proof of Theorem~\ref{kacformula}.
\begin{proof}[Proof of Theorem~\ref{jumptheorem}]
(i) Suppose $f$ has a single jump discontinuity at $c\in (0,1)$, and is left-continuous at $c$.  Extend the restriction  of $f$ on $(c,1]$ to a $C^2$ function 
 $\tilde{f}(s)$ on $[0,1]$, and  let $m$ be the index such that
\[ \frac{m}{n} \leq c < \frac{m+1}{n}
\]
 Then $f(k/n)$ are as in \eqref{fexp} for all $k$ except $k=m+1$, where we have
\begin{align*}
f\left(\frac{m+1}{n}\right) &= 
\frac{1}{2}\tilde{f}\left(\frac{m}{n}\right) + \frac{1}{4n}\tilde{f}'\left(\frac{m}{n}\right)
+ \frac{1}{2}f\left(\frac{m+1}{n}\right)
+ \frac{1}{4n}f'\left(\frac{m+1}{n}\right)
\\
&\quad + \Oo\left(\frac{1}{n^{2}}\right)
\\
&= \frac{1}{2}\tilde{a}_m + \frac{1}{2}a_{m+1} + \Oo\left(\frac{1}{n^{2}}\right)
\end{align*}
where 
\[ \tilde{a}_m = \tilde{f}\left(\frac{m}{n}\right) + \frac{1}{2n}\tilde{f}'\left(\frac{m}{n}\right)
\]
Then the sum \eqref{fsum} has to be modified by adding to it the term
\[ \frac{1}{2} \left( \tilde{a}_m - a_m\right)x_{m+1}^2
\]
(Note that $\tilde{a}_m \rightarrow f(c+)$ and $a_m \rightarrow f(c-)$ as $n\rightarrow\infty$.)
This modifies \eqref{detTna} to
\begin{align}
\frac{\sqrt{\pi}}{\sqrt{D_n(f)}}
 &=  \int_{-\infty}^{\infty} \cdots \int_{-\infty}^{\infty} e^{-\frac{a_1}{2}x_1^2} \prod_{k=1}^{n-1}K\left(x_k, x_{k+1};\frac{k}{n}\right) 
 \nonumber
\\
& \quad \times e^{-\frac{1}{2}(\tilde{a}_m - a_m)x_{m+1}^2}  e^{-\frac{a_n}{2} x_n^2}\,  dx_1\cdots dx_n 
 + o(1) \nonumber
\end{align}

Expanding again in eigenfunctions, eqn \eqref{detTna2} holds as long as we multiply the RHS   by
\begin{align} 
& \int_{-\infty}^{\infty} e^{-\frac{1}{2}(\tilde{a}_m - a_m)x^2} \phi_0\left(x; \frac{m}{n}\right) \phi_0\left(x; \frac{m+1}{n}\right) dx
\nonumber
\\
&\qquad =  \frac{\sqrt{2}  \left[ (f(c-)^2-4)(f(c+)^2-4)\right]^{1/8}}{\sqrt{f(c-)-f(c+) + \sqrt{f(c-)^2-4} + \sqrt{f(c+)^2-4}}} + o(1) 
\label{efunmod}
\end{align}
Thus eqn \eqref{kac2} holds as long as we multiply the RHS by the $-2$ power of the above expression.

Next we apply Lemma~\ref{integraljump} in the calculation of
the product in the denominator of the LHS of \eqref{kac2}.  With
$ g(x) = \log\left(\frac{f(x)+\sqrt{f^2(x)-4}}{2}\right)$, we have
\begin{align*}
\sum_{k=1}^{n-1} \log &\left(\frac{a_k}{2} + \sqrt{\left(\frac{a_k}{2}\right)^2-1}\right)
\\
&= \sum_{k=1}^{n-1}g\left(\frac{k}{n}\right) + \frac{1}{2}[g(1)-g(0)] 
 - \frac{1}{2}[g(c+)-g(c-)] + o(1)
\\
&= n\int_0^1 g(x) dx - \frac{1}{2}[g(1)+g(0)] 
 + \left(\{nc\}-\frac{1}{2}\right)[g(c+)-g(c-)]
\\
& \quad + \frac{1}{2}[g(1)-g(0)] -\frac{1}{2}[g(c+)-g(c-)] + o(1)
\\
&= n\int_0^1 g(x) dx - g(0) 
 + \left(\{nc\}-1\right)[g(c+)-g(c-)] + o(1)
\end{align*}
Taking exponentials, we obtain
\begin{align}
& \prod_{k=1}^{n-1}\left(\frac{a_k}{2} + \sqrt{\left(\frac{a_k}{2}\right)^2-1}\right) = G(f)^n
\nonumber \\
 & \
\times   \left\{ \left(\frac{f(0)+\sqrt{f^2(0)-4}}{2}\right)^{-1}  \left[ \frac{f(c+)+\sqrt{f(c+)^2-4}}{f(c-)+\sqrt{f(c-)^2-4}}\right]^{\{nc\}-1}  +  o(1)\right\}
\label{prod3}
\end{align}
Now, taking \eqref{kac2}, multiplying the RHS by the $-2$ power of \eqref{efunmod}, and combining this with \eqref{prod3}
 gives us \eqref{jumpeqn} when there is a single left-continuous  jump discontinuity.  When there are $r$ such discontinuities, we simply apply the same reasoning to each of them.  

(ii) The case when $f$ is right-continuous is similar.  We only have to change the index $m$ to be such that 
\[ \frac{m}{n} <c \leq \frac{m+1}{n}
\]
The calculation of the extra term on the RHS of \eqref{kac2} is then identical.  The calculation of the denominator on the LHS of \eqref{kac2} proceeds in the same way, by using the other part of Lemma~\ref{integraljump}.
\end{proof}


\end{document}